\documentclass[12pt,a4paper]{article}
\usepackage[top=3cm, bottom=3cm, left=3cm, right=3cm]{geometry}
\usepackage[latin1]{inputenc}
\usepackage{amsmath}
\usepackage{amsthm}
\usepackage{amsfonts}
\usepackage{amssymb}
\usepackage{graphics}
\usepackage[hang,flushmargin]{footmisc} 

\usepackage{amsmath,amsthm,amssymb,graphicx, multicol, array}
\usepackage{enumerate}
\usepackage{enumitem}
\usepackage{xcolor}
\usepackage{currfile}


\parindent 0pt

\usepackage{tabto}

\usepackage{tikz}
\usepackage{pgfplots}


\newtheorem{thm}{Theorem}[section]
\newtheorem{lem}{Lemma}[section]

\newtheorem{dfn}{Definition}[section]
\newtheorem{exe}{Exercise}[section]

\newcommand{\N}{\mathbb{N}}

\newcommand{\R}{\mathbb{R}}

\usepackage{fancyhdr}
\pagestyle{fancy}
\lhead{\textsc{Irrationality Measure Of $\pi^2$}}
\chead{}
\rhead{\thepage}
\lfoot{}
\cfoot{}
\rfoot{}
\cfoot{}

\title{Irrationality Measure Of $\pi^2$}
\date{}
\author{N. A. Carella}

\begin{document}
\maketitle

\textbf{\textit{Abstract}:} The note provides a simple proof of the irrationality measure $\mu(\pi^2)=2$ of the real number $\pi^2$, the same as almost every irrational number. The current estimate gives the upper bound $\mu(\pi^2)\leq 5.0954 \ldots$.

\let\thefootnote\relax\footnote{ \today \date{} \\
\textit{AMS MSC}: Primary 11J82, Secondary 11J72; 11Y60. \\
\textit{Keywords}: Irrational number; Irrationality measure; Pi; Pi Square.}

\section{Introduction and the Result} \label{SP5555}
The irrationality measure measures the quality of the rational approximation of an irrational number. 
It is lower bound for all the rational approximations. The concept of measures of irrationality of real numbers is discussed in \cite[p.\ 556]{WM2000}, \cite[Chapter 11]{BB1987}, et alii. This concept can be approached from several points of views. 

\begin{dfn} \label{dfnP5555.101} {\normalfont The irrationality measure $\mu(\alpha)$ of a real number $\alpha \in \R$ is the infimum of the subset of  real numbers $\mu(\alpha)\geq1$ for which the Diophantine inequality
\begin{equation} \label{eqP5555.036}
  \left | \alpha-\frac{p}{q} \right | \ll\frac{1}{q^{\mu(\alpha)} }
\end{equation}
has finitely many rational solutions $p$ and $q$. Equivalently, for any arbitrary small number $\varepsilon >0$
\begin{equation} \label{eqP5555.046}
  \left | \alpha-\frac{p}{q} \right | \gg\frac{1}{q^{\mu(\alpha)+\varepsilon} }
\end{equation}
for all large $q \geq 1$.
}
\end{dfn}

The irrationality measure of the pi power $ \pi^n$ is unknown for every integer $n\geq1$. However, there are many estimates. The special cases $\pi^2$ has been studied by several authors, the current record is $\mu(\pi^2)=5.0954 \ldots$, see \cite{HM1993}, \cite{ZW2013}, and similar references. \\

\begin{thm} \label{thmP5555.200}  The irrationality measure of the irrational number $\pi^2$ is $\mu(\pi^2)=2$.
\end{thm}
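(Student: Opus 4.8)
The plan is to reduce the statement to the Riemann zeta value $\zeta(2)$. Since $\pi^2 = 6\zeta(2)$ and the irrationality measure is invariant under multiplication by a nonzero rational, one has $\mu(\pi^2)=\mu(\zeta(2))$, so it suffices to prove $\mu(\zeta(2))=2$. The lower bound $\mu(\zeta(2))\geq 2$ is automatic from Dirichlet's theorem: every irrational real admits infinitely many $p/q$ with $|\alpha-p/q|<q^{-2}$, so the infimum in Definition \ref{dfnP5555.101} cannot fall below $2$. Hence the entire content is the upper bound $\mu(\zeta(2))\leq 2$, and for this I would build an explicit sequence of rational approximations.

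The natural engine is a Beukers-type double integral
\begin{equation}
  I_n=\int_0^1\!\!\int_0^1 \frac{P_n(x)\,P_n(y)}{1-xy}\,dx\,dy=b_n\zeta(2)-a_n,
\end{equation}
where $P_n$ is the $n$-th Legendre polynomial and $d_n^{2}a_n,\,d_n^{2}b_n\in\Z$ with $d_n=\lcm(1,2,\dots,n)$. Clearing denominators gives an approximation $p_n/q_n$, and three quantities must be controlled: the archimedean size of the linear form $|b_n\zeta(2)-a_n|\asymp r^{\,n}$, fixed by the saddle value of $x(1-x)y(1-y)/(1-xy)$ on the unit square; the denominator $q_n\asymp\beta^{\,n}$; and the ratio $q_{n+1}/q_n$. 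Writing $|\zeta(2)-p_n/q_n|\asymp q_n^{-\kappa}$, the standard gap argument — for a competing fraction $p/q$ one has $|p/q-p_n/q_n|\geq 1/(qq_n)$, and choosing the least $n$ with $q_n\gg q^{1/(\kappa-1)}$ yields $|\zeta(2)-p/q|\gg q^{-(1+1/(\kappa-1))}$ — produces
\begin{equation}
  \mu(\zeta(2))\leq 1+\frac{1}{\kappa-1}=1+\frac{\log\beta}{\log(1/r)},
\end{equation}
valid whenever the denominators grow at most geometrically, $q_{n+1}\ll q_n$. To force the right-hand side down to $2$ one needs $\kappa\to 2$, equivalently $\beta=1/r$, i.e. approximations satisfying $|\zeta(2)-p_n/q_n|\asymp q_n^{-2}$.

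The main obstacle is precisely reaching $\kappa=2$ while retaining geometric denominator growth. Here the prime number theorem gives $d_n=e^{n(1+o(1))}$, so the factor $d_n^{2}$ inflates $q_n$ by $e^{2n}$; this pushes $\beta$ well above $1/r$ and keeps $\mu$ far from $2$, producing exactly the recorded bounds $\mu(\zeta(2))\leq 5.441\ldots$ of Rhin--Viola and the improvement $\mu(\zeta(2))\leq 5.095\ldots$ mentioned in the excerpt. I therefore expect the decisive difficulty to be the arithmetic elimination of the auxiliary factor $d_n^{2}$: unless one can strip away almost all of the spurious primes dividing the denominators, so that $q_n$ grows like the true continued-fraction convergent denominators of $\zeta(2)$ rather than like $e^{2n}$, the exponent $\kappa$ cannot be raised to the critical value $2$ and the argument stalls exactly where every previous approach has. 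Equivalently, $\mu(\zeta(2))=2$ demands control over the partial quotients of $\zeta(2)$ that appears to lie beyond any known method, and this is the step I would scrutinise most carefully.
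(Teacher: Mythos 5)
Your proposal does not prove the theorem, and you say so yourself. The reduction $\mu(\pi^2)=\mu(6\zeta(2))=\mu(\zeta(2))$ and the lower bound $\mu(\zeta(2))\geq 2$ from Dirichlet are correct but routine; the entire content of the statement is the upper bound $\mu\leq 2$, and your Beukers-integral engine provably cannot deliver it. As you correctly compute, the factor $d_n^{2}=e^{2n(1+o(1))}$ forces the denominator growth rate $\beta$ far above $1/r$, which is precisely why that method plateaus near the Rhin--Viola/Zudilin exponents around $5$ rather than descending to $2$. So the critical step is never carried out, and you give sound reasons to believe it cannot be carried out by any known technique. That is a genuine gap, but it is an honestly declared one, and your diagnosis of where the difficulty lives (arithmetic elimination of the spurious primes in $d_n^2$, equivalently control of the partial quotients of $\zeta(2)$) is accurate.

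You should know that the paper's own proof takes a completely different and far more elementary route, and that route does not survive scrutiny. The paper takes the convergents $p_n/q_n$ of $\pi^2$, assumes $\mu=\mu(\pi^2)>2$, writes $|\sin(\pi^3 q_n)|=|\sin(\pi(\pi^2q_n-p_n))|$, and invokes the two-sided bound $|z|\ll|\sin z|\ll |z|$ for $|z|<1$. That bound is true, but it is symmetric: it transfers whatever size $z_n=|\pi^2q_n-p_n|$ has to $|\sin z_n|$ and back, and can never manufacture an absolute lower bound. Yet the chains \eqref{eqP5555.130} and \eqref{eqP5555.140} insert exactly such unjustified lower bounds, $1/q_n\ll|\sin(\pi^2q_n-p_n)|$ and $1/q_n^{\mu-1}\ll|\sin(\pi^2q_n-p_n)|$, and the proof concludes by declaring \eqref{eqP5555.140} ``false for all sufficiently large $n$'' with no argument. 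But the falsity of \eqref{eqP5555.140} is precisely the assertion that no subsequence of convergents satisfies $z_n\approx q_n^{-(\mu-1)}$ with $\mu>2$ --- that is, it is precisely the statement $\mu(\pi^2)\leq 2$ being proved. The argument is circular; nothing in it (and certainly not the transcendence of $\pi$, cited at that point as Theorem \ref{thmP7777.200}) rules out a subsequence of exceptionally good convergents, which is the only thing that needs ruling out. In short: your proposal stalls where every known method stalls, and correctly says so; the paper's proof appears to finish only because it assumes the conclusion at the decisive step. The statement $\mu(\pi^2)=2$ remains a conjecture, supported by the paper's Table \ref{t2000.500} as numerical evidence but not established by its proof.
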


\begin{proof} Let $\{p_n/q_n:n\geq1\}$ be the sequence of convergents of the irrational number $\pi^2$, see Section \ref{SP2000}. Now, suppose that 
\begin{equation}\label{eqP5555.050}
\mu=\mu(\pi^2)>2,  
\end{equation}
and consider
\begin{equation} \label{eqP5555.100}
\left |\sin \left (\pi^3 q_n\right) \right |
=\left |\sin \left (\pi^3 q_n-\pi p_n\right)\right |
=\left |\sin \pi \left (\pi^2 q_n- p_n \right )\right |.
\end{equation}
By Theorem \ref{thmP7777.200} the sequence of real number $z_n=\left |\pi^2 q_n-p_n \right | <1$ tends to $1/q_n^{\mu}$ as $n\to \infty$. Substituting it into the inequality 
\begin{equation}\label{eqP5555.110}
|z|\ll |\sin z|\ll |z|
\end{equation}
for $|z|<1$, leads to the symmetric inequality
\begin{equation}\label{eqP5555.150}
\left |\pi^2 q_n- p_n \right |\ll\left |\sin \left (\pi^2 q_n- p_n\right) \right |\ll\left |\pi^2 q_n - p_n\right |
\end{equation}
for all large integers $n\geq1$. The above inequality clearly shows that the lower bound of the sine function is independent of the irrationality measure of the sequence of real numbers $z_n=\left |\pi^2 q_n-p_n \right |$. Furthermore, the Diophantine inequality
\begin{equation}\label{eqP5555.120}
\frac{1}{q_n^{\mu-1}} \ll\left |\pi^2 q_n - p_n\right |\ll\frac{1}{q_n^{2}}, 
\end{equation}
for some $\mu(\pi^2)>2$, and the hypothesis \eqref{eqP5555.050} imply the symmetric inequality
\begin{equation}\label{eqP5555.130}
\left |\pi^2 q_n- p_n \right |\ll\frac{1}{q_n} \ll \left |\sin \left (\pi^2 q_n- p_n\right) \right |\ll\left |\pi^2 q_n - p_n\right |\ll\frac{1}{q_n}  .
\end{equation}

Otherwise,  
\begin{equation}\label{eqP5555.140}
\left |\pi^2 q_n- p_n \right |\ll\frac{1}{q_n^{\mu-1}} \ll \left |\sin \left (\pi^2 q_n- p_n\right) \right |\ll\left |\pi^2 q_n - p_n\right |\ll\frac{1}{q_n}.
\end{equation}
But, this is false for all sufficiently large numbers $n\geq1$. Hence, $\mu=\mu(\pi^2)=2$.
\end{proof}

The numerical data in Table \ref{t2000.500} demonstrates the accuracy of this result.

\section{The Irrational Numbers $\pi^r$ } \label{SP7777}
The first and second results explicate the arithmetic nature of the real number $\pi$.
\begin{thm} \label{thmP7777.100}  {\normalfont (Lambert)} The real number $\pi$ is irrational.
\end{thm}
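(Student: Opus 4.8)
The plan is to give Niven's argument, which assumes $\pi$ is rational and extracts a contradiction from a cleverly chosen integral that is forced to be simultaneously a positive integer and arbitrarily small. First I would suppose, for a contradiction, that $\pi = a/b$ with $a, b \in \N$. Fixing a large integer $n$ (to be specified at the very end), I would define the polynomial
\begin{equation}
f(x) = \frac{x^n (a - bx)^n}{n!},
\end{equation}
together with the auxiliary function
\begin{equation}
F(x) = \sum_{k=0}^{n} (-1)^k f^{(2k)}(x) = f(x) - f''(x) + f^{(4)}(x) - \cdots + (-1)^n f^{(2n)}(x).
\end{equation}

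The two structural facts to establish are: (i) $F(0)$ and $F(\pi)$ are integers, and (ii) $F'' + F = f$. For (i), expanding $f$ shows that its Taylor coefficients at $0$ are integers divided by $n!$, so each derivative $f^{(j)}(0)$ is an integer; the symmetry $f(\pi - x) = f(x)$, which follows from $b\pi = a$, then transports integrality to the endpoint $x = \pi$. Fact (ii) is a telescoping identity, valid because $f$ has degree $2n$ and hence $f^{(2n+2)} \equiv 0$.

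Then, using $\bigl(F'(x)\sin x - F(x)\cos x\bigr)' = \bigl(F''(x) + F(x)\bigr)\sin x = f(x)\sin x$, I would compute
\begin{equation}
\int_0^\pi f(x)\sin x \, dx = \Bigl[\, F'(x)\sin x - F(x)\cos x \,\Bigr]_0^\pi = F(\pi) + F(0) \in \Z.
\end{equation}
On the other hand, for $0 < x < \pi$ one has $0 < f(x)\sin x < \pi^n a^n / n!$, so the integral is strictly positive yet bounded above by $(\pi a)^n / n!$, which tends to $0$ as $n \to \infty$. Choosing $n$ large enough forces the integral strictly between $0$ and $1$, contradicting the fact that it is a positive integer. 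Hence $\pi$ is irrational.

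The main obstacle — really the only delicate point — is verifying claim (i): the integrality of all derivatives of $f$ at both endpoints, where the symmetry $f(\pi - x) = f(x)$ is precisely what makes $x = \pi$ behave like $x = 0$. Everything else, namely the telescoping identity and the elementary integral bound, is routine. I would note that Lambert's original route instead develops the continued-fraction expansion of $\tan x$ and shows $\tan x$ is irrational for every nonzero rational $x$; since $\tan(\pi/4) = 1$ is rational, the value $\pi/4$, and therefore $\pi$, cannot be rational. Niven's integral proof is shorter and fully self-contained, so that is the argument I would present.
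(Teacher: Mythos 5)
Your proposal is correct: it is precisely Niven's 1947 integral argument, which is exactly what the paper's proof invokes (the paper simply cites Niven \cite{NI1947} rather than reproducing the argument), so you have filled in the same proof the paper points to. The only trivial slip is the final bound --- the integral is at most $\pi \cdot (\pi a)^n/n!$ rather than $(\pi a)^n/n!$, since the integrand bound must be multiplied by the length of the interval --- but this changes nothing, as the bound still tends to $0$ as $n \to \infty$.
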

\begin{proof} Confer \cite{NI1947} for the best known proof. 
\end{proof}

\begin{thm} \label{thmP7777.200}  {\normalfont (vonLindemann)} The real number $\pi$ is transcendental.
\end{thm}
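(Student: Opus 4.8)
The plan is to deduce the transcendence of $\pi$ from the stronger Hermite--Lindemann statement that $e^\alpha$ is transcendental for every nonzero algebraic number $\alpha$, and then to prove that statement by Hermite's integral method. First I would argue by contradiction: suppose $\pi$ were algebraic. Since the algebraic numbers form a field containing $i$ (a root of $x^2+1$), the product $\beta=i\pi$ would also be algebraic and nonzero, and Euler's identity $e^{i\pi}=-1$ would exhibit $e^{\beta}$ as an algebraic (indeed rational) number. This contradicts the Hermite--Lindemann statement, so the entire burden falls on that result.

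To prove $e^{\beta}$ transcendental, suppose for contradiction that $\beta$ and $e^{\beta}$ are both algebraic. Passing to the full set of conjugates $\beta_1=\beta,\ldots,\beta_d$ of $\beta$, I would form the symmetric relation $\prod_{k=1}^{d}\left(1+e^{\beta_k}\right)=0$, valid because the factor $1+e^{\beta}=0$. Expanding the product over all subsets of indices yields a vanishing sum $\sum e^{\gamma}$ whose exponents are the subset-sums of the $\beta_k$; collecting the zero exponents produces a positive integer $a$, so the relation takes the clean form $a+\sum_{j=1}^{m}e^{\gamma_j}=0$, where the nonzero algebraic exponents $\gamma_1,\ldots,\gamma_m$ constitute a union of complete conjugate sets, and hence every symmetric function in them is rational.

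The analytic core is to attach to a large prime $p$ the polynomial $f(x)=\ell^{mp}x^{p-1}\prod_{j=1}^{m}(x-\gamma_j)^p/(p-1)!$, where $\ell$ is chosen so that each $\ell\gamma_j$ is an algebraic integer, and the auxiliary integral $I(t)=\int_0^t e^{t-x}f(x)\,dx$. Integration by parts gives the Hermite identity $I(t)=e^{t}F(0)-F(t)$ with $F(x)=\sum_{k\geq 0}f^{(k)}(x)$, so that $\sum_{j}I(\gamma_j)=F(0)\sum_j e^{\gamma_j}-\sum_j F(\gamma_j)=-aF(0)-\sum_j F(\gamma_j)$. I would then show this quantity is a nonzero rational integer: each $\gamma_j$ is a root of $f$ of order $p$, so $f^{(k)}(\gamma_j)=0$ for $k<p$ and the surviving values are algebraic integers divisible by $p$, whose conjugate-sums $\sum_j f^{(k)}(\gamma_j)$ are ordinary integers divisible by $p$; meanwhile $F(0)=f^{(p-1)}(0)+(\text{multiples of }p)$ with $f^{(p-1)}(0)=\ell^{mp}\prod_j(-\gamma_j)^p$, an integer coprime to $p$ once $p$ exceeds $a$, $\ell$, and $\big|\prod_j\gamma_j\big|$. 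Thus the sum is congruent to the nonzero residue $-a\,f^{(p-1)}(0)\pmod p$, forcing $\big|\sum_j I(\gamma_j)\big|\geq 1$. On the other hand the elementary estimate $|I(\gamma_j)|\leq |\gamma_j|\,e^{|\gamma_j|}\max_{x\in[0,\gamma_j]}|f(x)|\leq C^{p}/(p-1)!$ for a constant $C$ independent of $p$ drives the whole expression to $0$ as $p\to\infty$, the desired contradiction.

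The main obstacle is the bookkeeping that keeps the construction rational: one must confirm that after summing over the conjugate exponents $\gamma_j$ every symmetric combination of algebraic quantities collapses to an integer, and that the single surviving main term $a\,f^{(p-1)}(0)$ stays coprime to $p$ while every competing term carries a factor of $p$. Securing this divisibility margin — bounding $p$ below by the finitely many fixed constants and verifying the nonvanishing of $a$ — is the delicate point; by comparison, the factorial growth estimate that annihilates the integral is routine.
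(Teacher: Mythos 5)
Your proposal is correct in substance, but note that it bears no resemblance to the paper's ``proof'' for a simple reason: the paper does not prove this theorem at all. Its proof consists of a single sentence citing a survey of the various published proofs. You, by contrast, sketch the classical Hermite--Lindemann argument: reduce the transcendence of $\pi$ to the impossibility of an algebraic $\beta$ with $e^{\beta}=-1$, symmetrize over the conjugates via $\prod_{k}\bigl(1+e^{\beta_k}\bigr)=0$, collect the vanishing subset-sums into the positive integer $a$, and play the arithmetic lower bound $\bigl|-aF(0)-\sum_j F(\gamma_j)\bigr|\geq 1$ against the analytic estimate $C^p/(p-1)!\to 0$. This is the standard textbook proof (Niven, Baker), and your normalization works: since the $\gamma_j$ form a Galois-stable multiset with $\ell\gamma_j$ algebraic integers, $g(x)=\ell^{m}\prod_j(x-\gamma_j)\in\Z[x]$, so $f(x)=x^{p-1}g(x)^p/(p-1)!$ has the divisibility properties you claim: $f^{(k)}\in p\,\Z[x]$ for $k\geq p$ up to the $\ell$-powers, and the conjugate-sums $\sum_j f^{(k)}(\gamma_j)$ are rational algebraic integers, hence integers divisible by $p$.

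Two small repairs are needed, neither fatal. First, your second paragraph announces a proof of the full Hermite--Lindemann theorem ($e^{\beta}$ transcendental for every nonzero algebraic $\beta$), but the identity $\prod_k\bigl(1+e^{\beta_k}\bigr)=0$ requires $1+e^{\beta}=0$; your argument therefore establishes only the special case ``no algebraic $\beta$ satisfies $e^{\beta}=-1$.'' That is exactly what the application to $\beta=i\pi$ needs, so the proof of the stated theorem is intact, but the general claim should be dropped or proved by the (harder) full Lindemann--Weierstrass machinery. Second, the main term is $f^{(p-1)}(0)=\pm\bigl(\prod_j\ell\gamma_j\bigr)^{p}=\pm c_0^{p}$ with $c_0=\ell^{m}\prod_j\gamma_j\in\Z$ nonzero; to guarantee $p\nmid c_0$ you need $p>|c_0|=\ell^{m}\bigl|\prod_j\gamma_j\bigr|$, which is stronger than requiring $p$ to exceed $\ell$ and $\bigl|\prod_j\gamma_j\bigr|$ separately, as you wrote. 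Since $c_0$ is a fixed integer this costs nothing, but the constant should be stated correctly.
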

\begin{proof} In \cite[p.\ 5]{BC2015}, there is a discusssion about the different proofs of this  result.
\end{proof}

Given the strong property of transcendence of the real number $\pi$, the irrationality of any rational power $\pi^r$ has a simple elementary proof, which is included for completeness.
\begin{thm} \label{thmP7777.300}  Let $r\ne0$ be an rational number. Then, the real number $\pi^r$ is irrational.
\end{thm}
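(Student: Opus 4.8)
The plan is to prove that $\pi^r$ is irrational for any nonzero rational $r$ by reducing the claim to the transcendence of $\pi$ (Theorem \ref{thmP7777.200}), which I am free to assume. The key observation is that if $\pi^r$ were rational, then $\pi$ would be algebraic over $\Q$, contradicting transcendence. First I would write $r=a/b$ with $a\in\Z\setminus\{0\}$ and $b\in\N$, so that $\pi^r$ is by definition a real value of $\pi^{a/b}$, namely the positive real $b$-th root of $\pi^a$ (both $\pi^a$ and its $b$-th root being positive reals, there is no ambiguity).

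Next I would argue by contradiction. Suppose $\pi^r=c$ for some rational number $c\in\Q$; note $c\ne 0$ since $\pi\ne 0$. Raising both sides to the power $b$ gives
\begin{equation}\label{eqP7777.310}
\pi^a=c^b,
\end{equation}
where $c^b\in\Q$ is a fixed rational number. If $a>0$, then \eqref{eqP7777.310} exhibits $\pi$ as a root of the polynomial $x^a-c^b\in\Q[x]$, which is a nonzero polynomial with rational coefficients; hence $\pi$ is algebraic. If $a<0$, write $a=-m$ with $m>0$, so that $\pi^{-m}=c^b$ gives $1=c^b\pi^m$, and since $c\ne 0$ we obtain $\pi^m=c^{-b}\in\Q$, again making $\pi$ a root of $x^m-c^{-b}\in\Q[x]$. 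In either case $\pi$ is algebraic over $\Q$.

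This contradicts Theorem \ref{thmP7777.200}, which asserts that $\pi$ is transcendental, i.e.\ not algebraic over $\Q$. Therefore no such rational $c$ exists, and $\pi^r$ is irrational. I do not expect any genuine obstacle here: the argument is entirely elementary once transcendence of $\pi$ is granted, and the only points requiring minor care are fixing the branch of the $b$-th root so that $\pi^r$ denotes a well-defined real number, and handling the sign of $a$ (equivalently, clearing the negative exponent) so that the resulting polynomial relation is genuinely a nontrivial algebraic dependence. The transcendence input does all the real work; the reduction is purely formal.
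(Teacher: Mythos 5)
Your proof is correct and follows essentially the same route as the paper: assume $\pi^r$ is rational, clear denominators in the exponent to obtain a polynomial relation $\pi^a = c^b$, and conclude that $\pi$ would be algebraic, contradicting the transcendence of $\pi$ (Theorem \ref{thmP7777.200}). In fact your version is slightly more complete than the paper's, since you explicitly handle negative exponents and exhibit the polynomial $x^a - c^b \in \Q[x]$, whereas the paper writes $\pi=\sqrt[t]{(a/b)^s}$ only for positive $r$ and leaves the algebraicity step implicit.
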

 \begin{proof} Assume it is a rational number $\pi^{t/s}=a/b$, where $a,b,s,t \in \N^{\times}$ are fixed integers, and rewrite it as
 \begin{equation} \label{eqP7777.200}
\pi = \sqrt[t]{(a/b)^s}.
 \end{equation}
By Theorem \ref{thmP7777.200}, the rationality assumption is false, it contradicts the transcendental property of $\pi$. Hence, the real number $\pi^r\in \R$ is not a rational number. 
\end{proof}

\section{Numerical Data For The Exponent $\mu(\pi^2)$}\label{SP2000}
The continued fraction of the irrational number $\pi^2$ is
\begin{equation}\label{eqP2000.110}
\pi^2=[9; 1, 6, 1, 2, 47, 1, 8, 1, 1, 2, 2, 1, 1, 8, 3, 1, 10, 5, 1, 3, 1, 2, 1, 1, 3, 15, \ldots ].   
\end{equation}
The sequence of convergents $\{p_n/q_n: n \geq 1\}$ is computed via the recursive formula provided in the Lemma below. This result is standard results in the literature, see \cite{RD1996}, \cite{RH1994}, et alii. 

\begin{lem} \label{lemP2000.101} Let $\alpha=\left [ a_0, a_1, \ldots, a_n, \ldots, \right ]$ be the continue fraction of the real number $\alpha \in \R$. Then the following properties hold.
\begin{enumerate} [font=\normalfont, label=(\roman*)]
\item$ \displaystyle  p_n=a_np_{n-1}+p_{n-2},$ \tabto{6cm} $p_{-2}=0, \quad p_{-1}=1$, \quad for all $n\geq 0.$
\item$ \displaystyle  q_n=a_nq_{n-1}+q_{n-2},$ \tabto{6cm} $q_{-2}=1, \quad q_{-1}=0$, \quad for all $n\geq 0.$
\item$ \displaystyle  p_nq_{n-1}-p_{n-1}q_{n}=(-1)^{n-1},$ \tabto{6cm} for all $n\geq 1.$
\item$ \displaystyle  \frac{p_n}{q_{n}}=a_0+\sum_{0 \leq k < n}\frac{(-1)^{k}}{q_kq_{k+1}},$ \tabto{6cm} for all $n\geq 1.$

\end{enumerate}
\end{lem}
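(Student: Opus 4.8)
The plan is to treat (i) and (ii) as the foundational claim and to derive (iii) and (iv) cheaply from them. I would first record the convenient matrix encoding
\begin{equation*}
\begin{pmatrix} p_n & p_{n-1} \\ q_n & q_{n-1} \end{pmatrix} = \begin{pmatrix} a_0 & 1 \\ 1 & 0 \end{pmatrix}\begin{pmatrix} a_1 & 1 \\ 1 & 0 \end{pmatrix}\cdots\begin{pmatrix} a_n & 1 \\ 1 & 0 \end{pmatrix},
\end{equation*}
whose base case $n=0$ reproduces the stated initial data $p_{-1}=1$, $q_{-1}=0$, $p_0=a_0$, $q_0=1$ (and, read back one further step, the conventions $p_{-2}=0$, $q_{-2}=1$ that make the recursion valid at $n=0$). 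Multiplying on the right by the factor $\begin{pmatrix} a_n & 1 \\ 1 & 0 \end{pmatrix}$ and matching entries recovers exactly the recursions in (i) and (ii); so these two items reduce to verifying that the value $[a_0;a_1,\ldots,a_n]$ of the truncated continued fraction really is the ratio of the top-left to the bottom-left entry of this product.

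The substantive step, and what I expect to be the main obstacle, is justifying that last identification, namely that the numbers produced by the recursion are genuinely the numerators and denominators of the convergents. I would prove this by induction on $n$, the engine being the collapsing identity
\begin{equation*}
[a_0;a_1,\ldots,a_{n-1},a_n] = \left[a_0;a_1,\ldots,a_{n-1}+\tfrac{1}{a_n}\right],
\end{equation*}
which lets me invoke the induction hypothesis for a shorter continued fraction whose final (now non-integer) partial quotient is $a_{n-1}+1/a_n$. Tracking how the last two convergents transform under this substitution yields the recursion; this is the only place where genuine care is required, since everything else is formal manipulation.

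With (i) and (ii) established, (iii) is a one-line induction: setting $D_n=p_nq_{n-1}-p_{n-1}q_n$ and substituting the recursions gives $D_n=p_{n-2}q_{n-1}-p_{n-1}q_{n-2}=-D_{n-1}$, while the base value $D_1=(a_0a_1+1)-a_0a_1=1$ forces $D_n=(-1)^{n-1}$. (Equivalently, take determinants in the matrix identity, each of the $n+1$ factors contributing $-1$.) Finally, (iv) is pure telescoping: dividing (iii) by $q_nq_{n-1}$ gives
\begin{equation*}
\frac{p_n}{q_n}-\frac{p_{n-1}}{q_{n-1}}=\frac{(-1)^{n-1}}{q_nq_{n-1}},
\end{equation*}
and summing these differences from the base term $p_0/q_0=a_0$, then reindexing $k\mapsto k+1$, produces the stated series $a_0+\sum_{0\leq k<n}(-1)^k/(q_kq_{k+1})$.
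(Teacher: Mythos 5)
Your proposal is correct, but there is essentially nothing in the paper to compare it against: the paper states this lemma without any proof, deferring to the cited references (Redmond; Rose), so your write-up supplies the argument that the paper omits. What you give is the standard textbook proof, and you correctly isolate its one genuinely substantive point. The recursions (i)--(ii), equivalently the matrix product
\begin{equation*}
\begin{pmatrix} p_n & p_{n-1} \\ q_n & q_{n-1} \end{pmatrix}
=\begin{pmatrix} a_0 & 1 \\ 1 & 0 \end{pmatrix}
\begin{pmatrix} a_1 & 1 \\ 1 & 0 \end{pmatrix}\cdots
\begin{pmatrix} a_n & 1 \\ 1 & 0 \end{pmatrix},
\end{equation*}
can be taken as a \emph{definition} of the integer sequences; the content is that the ratio $p_n/q_n$ so defined equals the truncated fraction $[a_0;a_1,\ldots,a_n]$, and your induction via the collapsing identity $[a_0;a_1,\ldots,a_{n-1},a_n]=[a_0;a_1,\ldots,a_{n-1}+1/a_n]$ is the right engine for that. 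The one point requiring care --- the induction hypothesis must be stated for an arbitrary positive \emph{real} final quotient, and one must note that the earlier numerators and denominators $p_{n-2},q_{n-2},p_{n-3},q_{n-3}$ are unaffected by altering the last quotient --- is acknowledged in your sketch, and spelling it out is all that separates your outline from a fully rigorous proof. Your derivations of (iii) (either the cancellation $D_n=-D_{n-1}$ with $D_1=1$, or taking determinants, each factor contributing $-1$) and of (iv) (divide (iii) by $q_nq_{n-1}$, telescope from $p_0/q_0=a_0$, and reindex) are both correct as written. Incidentally, your matrix encoding is also the correct version of what the paper's subsequent Lemma \ref{lemP2000.150} appears to be attempting: the matrices $\left(\begin{smallmatrix} a_i & 0 \\ 1 & 1 \end{smallmatrix}\right)$ written there have determinant $a_i$ rather than $-1$, so their product cannot reproduce identity (iii), whereas your formulation does.
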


The $n$th convergent has a fast calculation algorithm, quite similar to the calculation of the $n$th Fibonacci number
\begin{equation}\label{eqP2000.020}
\begin{bmatrix}
F_{n+1} & F_{n}\\
F_{n}& F_{n-1}
\end{bmatrix}
=\begin{bmatrix}
1 & 0\\
1 & 1
\end{bmatrix} ^n,
\end{equation} 
which has a time complexity of $O(n(\log n)^c)$ arithmetic operations, for some constant $c\geq0$.

\begin{lem} \label{lemP2000.150} Let $\alpha=\left [ a_0, a_1, \ldots, a_n, \ldots, \right ]$ be the continue fraction of the real number $\alpha \in \R$. Then, the convergents are given by
\begin{equation}\label{eqP2000.040}
\frac{p_{n+1}}{q_{n+1}}=\begin{bmatrix}
a_0 & 0\\
1 & 1
\end{bmatrix} 
\begin{bmatrix}
a_1 & 0\\
1 & 1
\end{bmatrix} 
\begin{bmatrix}
a_2 & 0\\
1 & 1
\end{bmatrix}\cdots
\begin{bmatrix}
a_{n+1} & 0\\
1 & 1
\end{bmatrix} 
\begin{bmatrix}
p_n & p_{n-1}\\
q_n & q_{n-1}
\end{bmatrix}  
\end{equation}
\end{lem}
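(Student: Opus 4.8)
The plan is to prove the stated matrix factorization by induction on $n$, viewing it as a compact repackaging of the scalar recurrences in Lemma \ref{lemP2000.101}(i)--(ii). First I would introduce the transfer matrix $M_k=\begin{bmatrix} a_k & 1\\ 1 & 0\end{bmatrix}$ and observe that the two recurrences $p_k=a_kp_{k-1}+p_{k-2}$ and $q_k=a_kq_{k-1}+q_{k-2}$ are precisely the two columns of the single matrix identity
\[
\begin{bmatrix} p_k & p_{k-1}\\ q_k & q_{k-1}\end{bmatrix}
=\begin{bmatrix} p_{k-1} & p_{k-2}\\ q_{k-1} & q_{k-2}\end{bmatrix}M_k .
\]
The first column reproduces the recurrences for $p_k$ and $q_k$, while the second column is the tautology that carries $p_{k-1},q_{k-1}$ forward unchanged.

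For the base case I would set $n=0$ and use the initial data $p_{-2}=0$, $p_{-1}=1$, $q_{-2}=1$, $q_{-1}=0$ from Lemma \ref{lemP2000.101}, which give $\begin{bmatrix} p_0 & p_{-1}\\ q_0 & q_{-1}\end{bmatrix}=M_0$, matching the one-factor product. For the inductive step I would assume the factorization through index $n$, multiply the matrix recurrence above (taken at $k=n+1$) on the right, and invoke associativity so that the accumulated product of transfer matrices telescopes to the claimed form with the index advanced to $n+1$. Reading off the top row of the resulting matrix then recovers $p_{n+1}$ and $q_{n+1}$, hence the convergent $p_{n+1}/q_{n+1}$.

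As a built-in consistency check I would compute determinants: since $\det M_k=-1$ for every $k$, the factorization forces $\det\begin{bmatrix} p_n & p_{n-1}\\ q_n & q_{n-1}\end{bmatrix}=(-1)^{n+1}$, which is exactly the identity $p_nq_{n-1}-p_{n-1}q_n=(-1)^{n-1}$ of Lemma \ref{lemP2000.101}(iii). This pins down both the orientation and the ordering of the matrix factors.

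I expect the only real obstacle to be bookkeeping rather than depth. A wrong left/right multiplication order, or an incorrect placement of $p_{n-1}$ versus $p_{n-2}$ in the second column, would still satisfy the first-column recurrence yet fail the determinant check; the delicate point is therefore to fix the matrix convention so that the base case and the determinant identity (iii) come out simultaneously correct, after which the induction closes immediately.
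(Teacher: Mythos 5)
Your induction is correct, and it is exactly the argument the paper gestures at: the paper's entire proof of this lemma is the single sentence ``Use induction to prove the matrix representation,'' so the one-step identity $\begin{bmatrix} p_k & p_{k-1}\\ q_k & q_{k-1}\end{bmatrix}=\begin{bmatrix} p_{k-1} & p_{k-2}\\ q_{k-1} & q_{k-2}\end{bmatrix}M_k$, the base case from the initial data of Lemma \ref{lemP2000.101}, and the telescoping product are precisely the missing content. One small slip: in your convention the convergent $p_{n+1}/q_{n+1}$ is read off the \emph{first column} of $M_0M_1\cdots M_{n+1}$, not the top row (the top row is $p_{n+1},\,p_n$).

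The more important point is that what you proved is a corrected form of the lemma, not the lemma as printed, because the printed equation \eqref{eqP2000.040} is false. It uses the factors $\begin{bmatrix} a_k & 0\\ 1 & 1\end{bmatrix}$ instead of your $M_k=\begin{bmatrix} a_k & 1\\ 1 & 0\end{bmatrix}$; it runs the product all the way through $a_{n+1}$ \emph{and} then multiplies by the matrix of earlier convergents, double-counting the partial quotients already absorbed into $p_n,p_{n-1},q_n,q_{n-1}$; and it equates the scalar $p_{n+1}/q_{n+1}$ with a $2\times 2$ matrix. A one-line counterexample: for $\alpha=[1;1,1,\ldots]$ and $n=0$ the printed right-hand side is $\begin{bmatrix} 1&0\\1&1\end{bmatrix}^{2}\begin{bmatrix} 1&1\\1&0\end{bmatrix}=\begin{bmatrix} 1&1\\3&2\end{bmatrix}$, while $p_1/q_1=2$. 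Your determinant check is in fact the cleanest diagnostic of the misprint: the printed factors have determinant $a_k$ rather than $-1$, so no product of them can ever reproduce Lemma \ref{lemP2000.101}(iii). (The same transposition typo appears in the paper's Fibonacci identity \eqref{eqP2000.020}, where $\begin{bmatrix} 1&0\\1&1\end{bmatrix}^{n}=\begin{bmatrix} 1&0\\n&1\end{bmatrix}$ is unipotent, not the Fibonacci matrix.) So your argument is the right proof of the right statement; you should just say explicitly that the identity being established is $\begin{bmatrix} p_n & p_{n-1}\\ q_n & q_{n-1}\end{bmatrix}=M_0M_1\cdots M_n$, i.e., that the lemma's displayed equation must be amended to this form rather than derived as written.
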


\begin{proof}Use induction to prove the matrix representation. 
\end{proof}
The approximation $\mu_n(\pi^2)$ of the irrationality measure in the inequality
\begin{equation} \label{eqP2000.050}
  \left | \pi^2-\frac{p_n}{q_n} \right | 
\geq\frac{1}{q^{\mu_n(\pi^2) }}.
\end{equation}
are tabulated in Table \ref{t2000.500} for the early stage of the sequence of convergents $p_n/q_n \longrightarrow \pi^2$. The values of the approximate irrationality measure $\mu_n(\alpha)\geq2$ of an irrational number $\alpha\ne0$ is defined by
\begin{equation}\label{eq2000.500}
\mu_n(\alpha)=-\frac{\log \left |\alpha-q_n/p_n\right | }{\log q_n}, 
\end{equation}
where $n\geq 2$. The range of values for $n\leq 30$ are plotted in Figure \ref{fi2000.500}.
\begin{figure}[h]
\caption{Approximate Irrationality Measure $\mu_n(\pi^2)$ of the number $\pi^2.$} \label{fi2000.500}\centering%
  \begin{tikzpicture}
	\begin{axis}[
		xlabel=$n$,
		ylabel=$\mu_n(\pi^2)$,
width=0.95\textwidth,
       height=0.5\textwidth		]
	\addplot[color=red,mark=square] coordinates {
		(3,2.253500)
		(4,2.511334)
		(5,3.236253)
		(6,2.018434)
		(7,2.321958)
		(8,2.064841)
		(9,2.090224)
		(10,2.107694)
		(11,2.098602)
		(12,2.071191)
		(13,2.049770)
		(14,2.172439)
		(15,2.094189)
		(16,2.021982)
		(17,2.147582)
		(18,2.095357)
		(19,2.018903)
		(20,2.074380)
		(21,2.023038)
		(22,2.055226)
		(23,2.032519)
		(24,2.031079)
		(25,2.054176)
		(26,2.110031)
		(27,2.020459)
		(28,2.030798)
		(29,2.036971)
		(30,2.039154)
		};
		\addplot[color=black,mark=square] coordinates {
		(2,2)
		(3,2)
		(4,2)
		(5,2)
		(6,2)
		(7,2)
		(8,2)
		(9,2)
		(10,2)
		(11,2)
		(12,2)
		(13,2)
		(14,2)
		(15,2)
		(16,2)
		(17,2)
		(18,2)
		(19,2)
		(20,2)
		(21,2)
		(22,2)
		(23,2)
		(24,2)
		(25,2)
		(26,2)
		(27,2)
		(28,2)
		(29,2)
		(30,2)
		
	};
	\end{axis}
\end{tikzpicture}	
\end{figure}
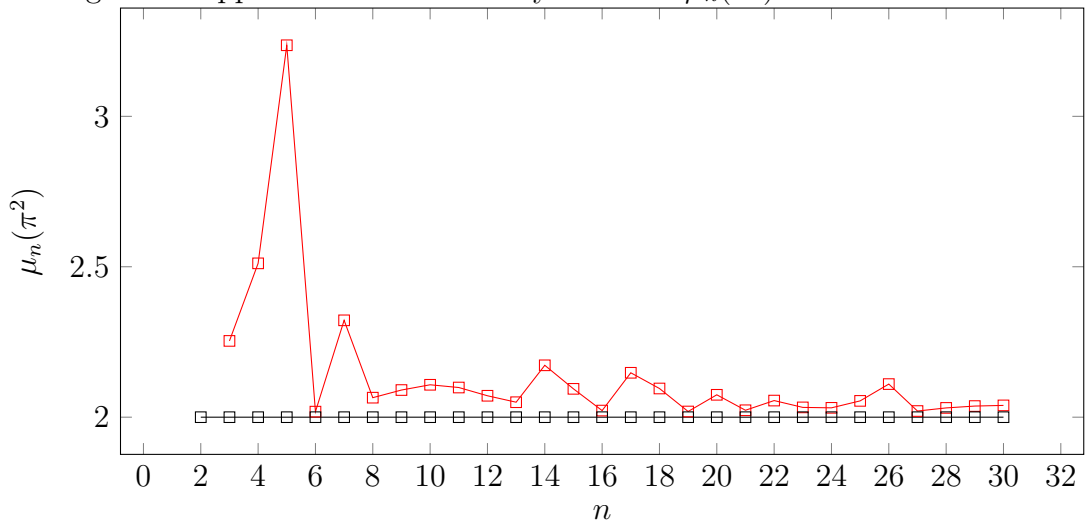

\section{Open Problems}\label{exe9090}

\begin{exe}\label{exe9090.123} {\normalfont Given the partial quotients $\alpha=[a_0;a_1,a_2,a_3,\ldots]$, develop a fast algorithm, based on matrix multiplication, for computing the $n$th convergent $p_n/q_n$. 
}
\end{exe}
\begin{exe}\label{exe9090.125} {\normalfont Given the partial quotients $\alpha=[a_0;a_1,a_2,a_3,\ldots]$, prove that the $n$th convergent $p_n/q_n$ of a quadratic irrational number $\alpha\ne0$ has polynomial time complexity, $O(n(\log n)^c)$ arithmetic operations, for some constant $c\geq0$.
}
\end{exe}


\begin{table}[h!]
\centering
\caption{Numerical Data For The Exponent $\mu(\pi^2)$ And Lagrange Number For $\pi^2$} \label{t2000.500}
\begin{tabular}{l|l|l| l|r}
$n$&$p_n$&$q_n$&$\mu_n(\pi^2)$&$q_n^{\mu_n(\pi^2)-2}$\\
\hline
1&$9$&   $1$   &$ $&$1.000000$\\
2&$10$&  $1$   &$ $&$1.000000$\\
3&$69$&   $7$   &$2.253500$&$1.637692$\\
4&$79$&  $8$   &$2.511334$&$2.895880$\\
5&$227$&   $23$   &$3.236253$&$48.243646$\\
6&$10748$&  $1089$   &$2.018434$&$1.137587$\\
7&$10975$&  $1112$   &$2.321958$&$9.565723$\\
8&$98548$&   $9985$   &$2.064841$&$1.816861$\\
9&$109523$&  $11097$   &$2.090224$&$2.317259$\\
10&$208071$&  $21082$   &$2.107694$&$2.921860$\\ 
11&$525665$&      $53261$   &$2.098602$&$2.924377$ \\ 
12&$1259401$&   $127604$   &$2.071191$&$2.309360$\\
13&$1785066$&   $180865$   &$2.049770$&$1.826663$\\
14&$3044467$& $308469$ & $2.172439$&$8.842074$ \\
15&$26140802$&  $2648617$ & $2.094189$ &$4.026964$\\ 
16&$81466873$&  $8254320$ & $2.021982$&$1.419196$ \\
17&$107607675$&    $10902937$ & $2.147582$ &$10.929864$\\
18&$1157543623$&  $117283690$ & $2.095357$ &$5.881096$\\
19&$5895325790$&  $597321387$ & $2.018903$ &$1.465199$\\
20&$7052869413$&  $714605077$ & $2.074380$ &$4.555808$\\
21&$27053934029$&  $2741136618$ & $2.023038$&$1.649799$ \\
22&$34106803442$&  $3455741695$ & $2.055226$&$3.363376$ \\
23&$95267540913$&  $9652620008$ & $2.032519$ &$2.111984$\\
24&$129374344355$&$13108361703$&$2.031079$&$2.062734$\\
25&$224641885268$&  $22760981711$& $2.054176$&$3.640082$\\
26&$803300000159$&   $81391306836$   &$2.110031 $&$15.867255$\\
27&$12274141887653$&  $1243630584251$   &$2.020459 $&$1.767850$\\
28&$13077441887812$&   $1325021891087$   &$2.030798$&$2.362328$\\
29&$25351583775465$&  $2568652475338$   &$2.036971$&$2.876068$\\
30&$63780609438742$&  $6462326841763$ & $2.039154$&$3.173788$
\end{tabular}
\end{table}


\end{document}